\numberwithin{equation}{section} 
\theoremstyle{plain} 
\newtheorem{thm}{Theorem}[section] 
\newtheorem{theorem}[thm]{Theorem}
\newtheorem{prop}[thm]{Proposition} 
\newtheorem{cor}[thm]{Corollary} 
\newtheorem{defteor}[thm]{Definition} 
\newcommand\theoref{Theorem~\ref} 
\newcommand\propref{Proposition~\ref} 
\newcommand\corref{Corollary~\ref}
\theoremstyle{definition} 
\newtheorem{defin}[thm]{Definition} 
\newtheorem{definition}[thm]{Definition}
\newtheorem{rem}[thm]{Remark} 
\newtheorem{remark}[thm]{Remark}
\newtheorem{question}[thm]{Question}
\DeclareMathOperator{\Ker}{{\rm Ker}} 
\DeclareMathOperator{\im}{{\rm Im}}
\def\wt{\widetilde}
\def\C{{\mathbb C}} 
\def\Q{{\mathbb Q}} 
\def\P{{\mathbb P}} 
\def\R{{\mathbb R}} 
\def\T{{\mathbb T}}
\def\rp{{\R \P}}
\def\1{\hbox{\rm\rlap {1}\hskip.03in{\rom I}}} 
\def\Bbbone{{\rm1\mathchoice{\kern-0.25em}{\kern-0.25em} 
{\kern-0.2em}{\kern-0.2em}I}} 
\def\wt{\widetilde}
\def\m{\medskip}
\long\def\forget#1\forgotten{} %
\newcommand\ver[1]{\marginpar{\tiny Changed in Ver \VER}} 
\date{\today} 
\begin{document}  

\title[Essential manifolds]
{Essential manifolds with extra structures} 

\author[S. Kutsak]
{Sergii Kutsak}

\address {Sergii Kutsak, Department of Mathematics, University of Florida,
358 Little Hall, Gainesville, FL 32601, USA}
\email{sergiikutsak@ufl.edu} 
\begin{abstract}
We consider classes of algebraic manifolds  $\mathcal{A}$, of symplectic manifolds $\mathcal{S}$, of symplectic manifolds  with the hard Lefschetz property $\mathcal{HS}$ and the class of cohomologically symplectic manifolds $\mathcal{CS}$. For every class of manifolds $\mathcal{C}$ we denote by $\mathcal{EC}(\pi,n)$ a subclass of $n$-dimensional rationally essential manifolds with fundamental group $\pi$. In this paper we prove that for all the above classes with symplectically aspherical form the condition $\mathcal{EC}(\pi,2n)\ne \emptyset$ implies  that $\mathcal{EC}(\pi,2n-2)\ne\emptyset $ for every $n>2$. Also we prove that all the inclusions $\mathcal{EA}\subset\mathcal{EHS}\subset\mathcal{ES}\subset\mathcal{ECS}$ are proper.
\end{abstract}
\maketitle

\section{Introduction}

  Let $M$  be a closed, connected, orientable manifold of 
dimension $n$ and let $ \pi $ be the fundamental group of $M$. 
Recall that  A map $ f:M \to K(\pi,1) $ is called  a {\em classifying map} for $M$ if $f$ induces an isomorphism $f_* :\pi_1(M, x_0) \to \pi_1(K(\pi,1), f(x_0))$ 
for all $ x_0 \in M$. It is well-known that a classifying map  exists and is unique up to homotopy.
Gromov called a manifold $M$ {\em inessential} if there exists a classifying map $f: M \to K(\pi,1)$ to the $(n-1)$-skeleton
of $K(\pi,1)$. Otherwise $M$ is called {\em essential}~\cite{Gr1}. Gromov noticed that manifolds with positive scalar curvature tend to be inessential. He introduced several classes of essential manifolds (hyperspherical, hypereuclidean, enlargeable,~\cite{Gr2}) for which he jointly with Lawson proved that manifolds of those classes
cannot carry a metric with positive scalar curvature~\cite{GL}.
 The following is found in reference \cite[Lemma 2.4]{DR}.

\begin{prop} An orientable $n$-manifold $ M $ is {\it essential}
if and only if  the homomorphism $ f_\ast : H_n (M) \to H_n (K(\pi,1)) $ induced  by the classifying map $f$ is nontrivial.  Equivalently, if the image of the 
fundamental class $[M]\in H_n(M)$ under $ f_\ast $ is nontrivial in 
the $ n \text{th} $ integral homology group $ H_n( K( \pi,1) )$ of the 
Eilenberg-MacLane space $ K( \pi, 1) $ . 
\end{prop}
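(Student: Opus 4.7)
The forward implication is straightforward. If $M$ is inessential, one can choose by definition a classifying map $f \colon M \to K(\pi,1)^{(n-1)}$; since the target has CW-dimension at most $n-1$, we have $f_*[M] \in H_n(K(\pi,1)^{(n-1)}) = 0$, and this vanishing persists after composing with the inclusion into $K(\pi,1)$. Uniqueness of classifying maps up to homotopy then gives $f_*[M] = 0$ for every classifying map.

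For the converse, write $X = K(\pi,1)$ and suppose $f_*[M] = 0$. By cellular approximation I may assume $f$ already factors through $X^{(n)}$. The plan is to deform $f$ further into $X^{(n-1)}$, and I would do this by using the cofiber sequence
$$X^{(n-1)} \hookrightarrow X \xrightarrow{\; q \;} X / X^{(n-1)}.$$
A standard property of cofibrations gives exactness of the sequence of pointed sets $[M, X^{(n-1)}] \to [M, X] \to [M, X/X^{(n-1)}]$, so it suffices to show that the composition $q f$ is null-homotopic.

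Set $Y = X / X^{(n-1)}$. Since $Y$ has a single $0$-cell and only cells of dimension $\geq n$, it is $(n-1)$-connected. From the long exact sequence of the pair $(X, X^{(n-1)})$, together with $H_n(X^{(n-1)}) = 0$ (as $X^{(n-1)}$ has CW-dimension at most $n-1$), the quotient map induces an injection $q_* \colon H_n(X) \hookrightarrow H_n(Y)$. Hence the hypothesis $f_*[M] = 0$ yields $(q f)_*[M] = 0$.

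The main step, and the place where the real content lies, is to deduce from this vanishing that $qf$ is null-homotopic. For this I would invoke the standard fact that a map from an $n$-dimensional closed orientable manifold to an $(n-1)$-connected space is null-homotopic if and only if it induces the zero map on $H_n$. The justification is obstruction-theoretic: since $Y$ is $(n-1)$-connected, the characteristic-class construction gives a bijection $[M, Y] \cong H^n(M; \pi_n Y)$; the Hurewicz theorem identifies $\pi_n(Y) \cong H_n(Y)$, and Poincar\'e duality for closed orientable $M$ gives $H^n(M; A) \cong H_0(M; A) \cong A$, under which the composite bijection sends $g \mapsto g_*[M]$. Applied to $g = q f$, we conclude $q f \simeq *$; by the cofibration exactness recalled above, $f$ is then homotopic to a map into $X^{(n-1)}$, so $M$ is inessential.
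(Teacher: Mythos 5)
The paper does not actually prove this proposition; it quotes it from \cite[Lemma 2.4]{DR}, so the comparison below is with the standard argument rather than with a proof in the text. Your forward direction is fine, and in the converse everything up to the conclusion $qf\simeq *$ is correct. The fatal step is the assertion that the cofibration $X^{(n-1)}\hookrightarrow X$ yields an exact sequence of pointed sets $[M,X^{(n-1)}]\to[M,X]\to[M,X/X^{(n-1)}]$. That is not a property of cofibrations: the Puppe sequence gives exactness of $[X/A,Z]\to[X,Z]\to[A,Z]$, i.e.\ for maps \emph{out of} the cofiber sequence; the covariant statement you need would require a fibration. Concretely, take $A=S^1\subset X=\R P^2$ (the $1$-skeleton), $M=S^2$, and $g:S^2\to \R P^2$ the double cover. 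Then $qg:S^2\to X/A=S^2$ has degree $0$ (it is invariant under the antipodal map), hence is null-homotopic, yet $g$ generates $\pi_2(\R P^2)\cong\Z$ and so cannot be compressed into $A$, since every map $S^2\to S^1$ is null. The same example shows that the statement your converse would actually establish --- for an \emph{arbitrary} CW target, $f_*[M]=0$ forces compression into the $(n-1)$-skeleton --- is false; indeed your argument nowhere uses that the target is aspherical or that $f$ is a $\pi_1$-isomorphism, which is a warning sign.

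The missing content is precisely the passage from the quotient $X/X^{(n-1)}$ back to the pair $(X,X^{(n-1)})$. The unique obstruction to compressing $f:M\to X^{(n)}$ into $X^{(n-1)}$ lies in $H^n(M;\Lambda)$ with \emph{local} coefficients in the $\Z[\pi]$-module $\Lambda=\pi_n(X,X^{(n-1)})$; what you compute is only its image in the untwisted group $H^n(M;\pi_n(X/X^{(n-1)}))$, and the comparison map $\pi_n(X,X^{(n-1)})\to\pi_n(X/X^{(n-1)})$ is merely surjective in this degree (Blakers--Massey gives isomorphisms only up to degree $n-1$ here, since $X^{(n-1)}$ is just connected). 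The standard repair, which is where orientability and the $\pi_1$-isomorphism hypothesis actually enter, is: Poincar\'e duality with local coefficients gives $H^n(M;\Lambda)\cong H_0(M;\Lambda)\cong\Lambda_\pi$ (coinvariants), the relative Hurewicz theorem identifies $\Lambda_\pi$ with $H_n(X,X^{(n-1)})$, and under these identifications the obstruction is the image of $f_*[M]$ under the injection $H_n(X)\to H_n(X,X^{(n-1)})$. With that argument substituted for the false exactness claim, your outline becomes the correct proof.
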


For example, the real projective space $ \rp^{2n+1}$ is an essential manifold. 
Every aspherical manifold (for example, the 
torus $\T^n$, a compact orientable surface $M_{g}$ of genus $g$) is essential.  There are no simply connected essential manifolds of positive dimension.

 \begin{definition}
Let $M$  be a closed, connected, orientable manifold of 
dimension $n$ and let $ \pi $ be the fundamental group of $M$. We say that manifold $M$ is {\em rationally essential} if a classifying map $ f:M \to K(\pi,1) $ induces nontrivial homomorphism $f_*:H_n(M;\mathbb{Q}) \to H_n(K(\pi,1);\mathbb{Q})$. 
 \end{definition}

Clearly,  every rationally essential manifold is essential but not vise versa:  $ \rp^{2n+1}$ is not rationally essential.

\m Clearly, if $H_n(K(\pi,1))=0$ then there are no essential (and hence rationally essential) $n$-manifolds with the fundamental group $\pi$. The converse also holds: if $\pi$ is a finitely presented group and $H_n(\pi;\Q)\ne 0$ 
then there exists a rationally essential $n$-manifold with the fundamental group $\pi$, see \theoref{main} below.

\m Brunnbauer and Hanke gave a characterization of Gromov type  classes of rationally
essential manifolds  with given
 fundamental group in terms of group homology~\cite{BH}. In this paper we consider similar problem for some
symplectic type classes.

\m
Given a class of manifolds $\mathcal C$ we denote by  $\mathcal{EC}$ the subclass that consists of rationally essential manifolds.
Here we consider the following classes:
$$
\mathcal{A}\subset\mathcal{HS}\subset\mathcal{S}\subset\mathcal{CS} 
$$ 
where
$\mathcal{A}$ is the class of algebraic manifolds,
$\mathcal{S}$  is the class of symplectic manifolds,
$\mathcal{HS}$  is the class of symplectic manifolds with the hard Lefschetz property,
and
$\mathcal{CS}$  is the class of cohomologically symplectic manifolds (see sections \ref{s3} and \ref{s4} below).
It is known that all the above inclusions of classes are proper \cite{C,TO,G1,DGMS}. We will show that the inclusions of the essential counterparts are also proper. \\

For every class of manifolds $\mathcal C$ we denote by $\mathcal C(\pi,n)$ a subclass of $n$-dimensional manifolds
with fundamental group $\pi$. This paper is an attempt to study the following question.

\

MAIN PROBLEM. {\em  For which values $\pi$ and $n$, is $\mathcal{EC}(\pi, n)$ non-empty?}

\m 
In particular, in the paper we address the following conjecture proposed by Dranishnikov and Rudyak:

\

CONJECTURE. {\em For the first three above classes for $n>2$ the condition $\mathcal{EC}(\pi, 2n)\ne\emptyset$ implies  that 
$\mathcal{EC}(\pi,2n-2)\ne\emptyset $}.

\m
We prove for all the above classes a weaker version of the conjecture that deals with
symplectically aspherical manifolds, see Section 3 for the definition. 

\m
Note that every complex projective algebraic manifold $V$ is symplectic: the corresponding symplectic form is given by the K\"ahler form ~\cite[page 109]{GH}. In particular, we are able to speak about symplectically aspherical algebraic manifolds.

\section{Acknowledgements}
I would like to thank my advisors Alexander Dranishnikov and Yuli Rudyak for their support and guidance throughout this work.
I am very grateful for their insightful discussions and useful advices.

\section{Preliminaries}\label{s3}
The following fact is known (see for example {\cite{BH},\cite{Dr}). 
Since there is no detailed proof of it in print, we give a complete proof here.
\begin{theorem}\label{main} For every finitely presented group $ \pi $ and every integer $ n $ if $ H_n(\pi;\mathbb{Q}) \neq 0 $ then for every nontrivial element $ \alpha \in  H_n(\pi;\mathbb{Q}) $  there exists a closed, connected, orientable $n$-manifold $M$, an integer $ k \neq 0 $ and a map $ f:M \to K(\pi,1) $ such that $ f_*([M])= k\alpha $ and $ f_*:\pi_1(M) \to \pi_1(K(\pi,1)) $ is a group isomorphism. 
\end{theorem}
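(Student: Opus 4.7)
My approach is to realize a nonzero rational multiple of $\alpha$ by a singular manifold via Thom representability, and then surgically modify that manifold so that the classifying map becomes a $\pi_1$-isomorphism. I tacitly assume $n\geq 3$ so that the embedding and surgery arguments apply; the low-dimensional cases $n\leq 2$ force $\pi$ to be $\Z$ or a surface group and can be handled directly.

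First I would clear denominators: choose a positive integer $m$ for which $m\alpha$ is the rational image of an integral class $\beta\in H_n(\pi;\Z)=H_n(K(\pi,1);\Z)$. By Thom's theorem on the representability of integer homology classes by singular oriented manifolds, some nonzero multiple $l\beta$ is realized as $g_\ast[N]$ for a closed connected oriented $n$-manifold $N$ and a continuous map $g\colon N\to K(\pi,1)$; hence $g_\ast[N]=lm\alpha$ in $H_n(K(\pi,1);\Q)$.

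Second, I would arrange surjectivity of $g_\ast$ on $\pi_1$ by stabilization. Let $x_1,\ldots,x_r$ be generators of $\pi$ and form $N'$, the connected sum of $N$ with $r$ copies of $S^1\times S^{n-1}$. By van Kampen, $\pi_1(N')\cong\pi_1(N)\ast F_r$, while $H_n(N')=H_n(N)$. Extend $g$ to $g'\colon N'\to K(\pi,1)$ by sending the $i$-th free $S^1$-factor to a loop representing $x_i$; then $g'_\ast[N']=lm\alpha$ rationally and $g'_\ast\colon\pi_1(N')\to\pi$ is surjective.

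The heart of the proof is the third step: killing $K:=\ker g'_\ast$. Because $\pi$ is finitely presented and $\pi_1(N')$ is finitely generated, $K$ is the normal closure of finitely many elements $\gamma_1,\ldots,\gamma_s$, each null-homotopic in $K(\pi,1)$ under $g'$. Represent the $\gamma_i$ by disjoint smoothly embedded circles with trivial normal bundles in $N'$ (possible because $n\geq 3$ and $N'$ is orientable) and perform simultaneous $1$-surgery to obtain $M$. The null-homotopies allow $g'$ to extend across the $2$-handles of the surgery trace $W$, producing $G\colon W\to K(\pi,1)$ whose restriction to $M$ is the desired map $f$. The cobordism relation then gives $f_\ast[M]=g'_\ast[N']=k\alpha$ with $k=lm\neq 0$, and the standard van Kampen calculation for surgery yields $\pi_1(M)\cong\pi_1(N')/K\cong\pi$ with the induced homomorphism being precisely $f_\ast$.

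The principal obstacle is this last step: one must simultaneously realize the normal generators of $K$ by disjoint framed embedded circles, verify that the resulting surgery quotient of $\pi_1$ is exactly $\pi$, and confirm that extending $g'$ over the $2$-handles preserves the rational fundamental class. All three pieces hinge on $n\geq 3$, so that general position and the surgery formula for $\pi_1$ behave as expected.
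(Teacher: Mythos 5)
Your proposal is correct and takes essentially the same route as the paper: Thom realizability of a nonzero multiple of $\alpha$, then $0$-surgeries (your connected sums with $S^1\times S^{n-1}$ are exactly the paper's attachment of $1$-handles along $S^0\times D^n$, with the map extended over the handle by a loop hitting a missing generator) to make the classifying map surjective on $\pi_1$, and finally framed $1$-surgeries killing the kernel, which is normally finitely generated because $\pi$ is finitely presented. Both arguments implicitly require $n\ge 3$; you flag this restriction (though your remark that $n\le 2$ forces $\pi$ to be $\Z$ or a surface group is imprecise), while the paper is silent on it.
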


\begin{proof}
 Let $ \pi $ be a finitely presented group and let $ n  $ be an integer such that $ H_n(\pi;\mathbb{Q}) \neq 0 $. Take any nontrivial element $ \alpha $ in
  $ H_n(\pi;\mathbb{Q}) $. Because of a theorem of Thom, there exist  a closed $n$-manifold $ N $, an integer $ k \neq 0 $ and a map $ g:N \to K(\pi,1)$ such that $g_*([N])= k\alpha $, see e.g.~\cite[Theorem IV.7.36]{R}. Suppose that $ g_*:\pi_1(N) \to \pi_1(K(\pi,1)) $ is not surjective. Let $ \alpha:[0,1] \to K(\pi,1) $ be a loop such that $ [\alpha] \in \pi_1(K(\pi,1))\backslash im(g_*) $  and 
$ \alpha(0)=\alpha(1)=y_0 $. Without loss of generality we can assume that $ y_0 \in \im (g) $ since the fundamental groups of $ K(\pi,1) $ based at different points are isomorphic because $ K(\pi,1) $ is path connected. Take $ x_0 \in N $ such that $ f(x_0)=y_0 $. Consider chart $ (U,\varphi) $ on 
$ N $ such that $ \varphi(U)=\mathbb{R}^n $ and $ \varphi(x_0)=0 $. Now define function $ h:\mathbb{R}^n \to \mathbb{R}^n $ in generalized spherical coordinates as follows 
\[ 
h(r,\theta_1,...,\theta_{n-1})=\left \{\begin{array}{cl}
0, &\text{ if }  0 \leq r \leq 1,\\
(r-1,\theta_1,...,\theta_{n-1}),&\text{ if }  r>1.
\end{array}
\right.
\] 
To perform a surgery on a manifold $ N $ we shall define a new function $ \wt{g} $ by: $ \wt{g}(x)=g(x) $ if $ x \notin U $, 
$ \wt{g}(x)=g(\varphi^{-1}h\varphi(x)) $ if $ x \in U $. Then $ \wt{g} $ is homotopic to $ g $ because $ h $ is homotopic to the identity map on $ \mathbb{R}^n $. Let $ D $ be the preimage under $ \varphi $ of the unit ball in $ \mathbb{R}^n $ centered at $ 0 $. Now we perform a surgery on the manifold $ N $. There exists an embedding $ i: S^0 \times D^n \to N $ such that $ i(S^0 \times D^n) \subseteq D $ and $ x_0 \notin i(S^0 \times D^n) $. Form a new manifold from the union of $ N \times I $ and $ D^1 \times D^n $ by attaching $ S^0 \times D^n $ to its image under $ i \times 1 $. We can extend map $\wt{g} \times 1$ by defining $ \wt{g} $ on $ D^1 \times D^n $ as follows 
\[ 
 \wt{g}(t,x)=\alpha(t)  \text{ for all }  (t,x) \in D^1 \times D^{n-1}.  
\]
Connect point $ x_0 $ with points $ (0,c),(1,c) $ in $ D^1 \times D^{n-1} $ for some $ c \in D^n $ by paths $ \gamma_1(t), \gamma_2(t) $ respectively. Let $ \beta(t)=(t,c) \in D^1 \times D^n $ for all $ t \in [0,1] $. Then $ (\wt{g} \times 1)_*(\gamma_1\beta\gamma_2^{-1})=\alpha $.
So we can construct a manifold $ \wt{N} $ and a map $ \wt{g}:\wt{N} \to K(\pi,1) $ such that $ \wt{g_*}([\wt{N}])=k\alpha $ and $ \wt g_* $ induces an epimorphism on fundamental groups. Now we want to perform surgeries that annihilate the elements that generate the kernel of $\wt{g_*}$. Note that since $\wt{N}$ is orientable then every loop $\gamma$ in $\wt{N}$ can be homotoped to a loop $\wt{\gamma}$ that has
trivial normal bundle in $\wt{N}$. Clearly, if a loop $ \wt{\gamma}$ is trivial then the loop $\alpha \wt{\gamma} \alpha^{-1}$ is also trivial
for every path $\alpha:[0,1] \to \wt{N}$ such that $\alpha(1)=\wt{\gamma}(0)$. Since $\Ker(\wt{g_*})$ is normally finitely generated ~\cite{W} then  we can perform surgery on $\wt{N}$ finitely many times to construct a manifold $M$ and a map $f:M \to K(\pi,1)$ that induces isomorphism $f_*:\pi_1(M) \to \pi_1(K(\pi,1))$ and such that $f_*([M])=k\alpha$.
\end{proof}

Note that every oriented manifold of dimension $\le 2$ is essential, an oriented 3-manifold $M$ is essential iff the group $\pi_1(M)$ is not free, \cite {GG, RO}.   

\begin{defin} We define a cohomology class $v\in H^m(X;G)$ to be {\it aspherical} if $v=f^*(a)$ for a classifying map $f: X\to K(\pi_1(X),1)$ and some $a\in H^m(K(\pi_1(X),1);G)$.
\end{defin}

Note that if a class $v$ is aspherical and $v^k\ne 0$ then $v^k$ is aspherical.

\begin{prop}\label{p:aspher} Let $M$ be a closed, orientable manifold of dimension $km$, and let $u\in H^m(M;\Q)$ be an aspherical class. If $u^k\ne 0$, then $M$ is rationally essential.
\end{prop}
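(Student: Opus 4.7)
The plan is to show that $f_*([M]) \neq 0$ in $H_{km}(K(\pi,1);\Q)$, where $f \colon M \to K(\pi,1)$ is the classifying map; by the rational version of the proposition stated just after the definition of essential manifolds, this is equivalent to $M$ being rationally essential. The approach is a one-line calculation with the Kronecker pairing, using the hypothesis that $u$ is aspherical.

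First I would unpack the assumption: since $u$ is aspherical, there exists $a \in H^m(K(\pi,1);\Q)$ with $u = f^*(a)$. By naturality of the cup product, $u^k = f^*(a^k)$. The key computation is then, using the adjointness of $f_*$ and $f^*$ with respect to the Kronecker pairing,
\[
\langle a^k,\, f_*([M]) \rangle \;=\; \langle f^*(a^k),\, [M] \rangle \;=\; \langle u^k,\, [M] \rangle.
\]
Since $M$ is closed, connected and orientable of dimension $km$, the group $H^{km}(M;\Q)$ is one-dimensional, and the evaluation pairing with $[M]$ is a nondegenerate map $H^{km}(M;\Q) \to \Q$. Therefore the hypothesis $u^k \neq 0$ forces $\langle u^k, [M] \rangle \neq 0$, and consequently $\langle a^k, f_*([M]) \rangle \neq 0$.

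From this nonvanishing it follows that $f_*([M]) \neq 0$ in $H_{km}(K(\pi,1);\Q)$, which by the characterization of rational essentiality (the homological reformulation of the proposition quoted from \cite{DR} applied with rational coefficients) gives that $M$ is rationally essential.

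There is no real obstacle here: the argument is a direct application of naturality of cup product together with Poincar\'e duality (in its weakest form, namely that $H^{km}(M;\Q)$ is generated by a class pairing nontrivially with $[M]$). The only thing worth being careful about is the rational, rather than integral, framing---one must pass through $H^m(K(\pi,1);\Q)$ and $H_{km}(K(\pi,1);\Q)$ consistently, so that the asphericity hypothesis $u = f^*(a)$ lives in the correct coefficient group and the conclusion $f_*([M]) \neq 0$ is indeed in rational homology, matching the definition of rationally essential.
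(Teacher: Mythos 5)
Your argument is correct: asphericality gives $u^k=f^*(a^k)$, the Kronecker pairing of $u^k$ with $[M]$ is nonzero because $H^{km}(M;\Q)\cong\Q$ pairs nondegenerately with the fundamental class, and naturality transfers this to $\langle a^k, f_*([M])\rangle\ne 0$, so $f_*([M])\ne 0$. The paper states this proposition without proof, and your calculation is exactly the standard argument the author is implicitly relying on (note only that connectedness of $M$, omitted in the statement but built into the paper's definition of rationally essential, is needed for $H^{km}(M;\Q)$ to be one-dimensional).
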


 \begin{definition} 
 A {\em symplectic structure} on a smooth manifold $ M $ is a non-degenerate skew-symmetric closed 2-form $ \omega \in \Omega^2(M) $. A {\em symplectic manifold} is a pair $ (M,\omega) $ where $ M $ is a smooth manifold and $ \omega $ is a symplectic structure on $ M $. 
\end{definition}

The non-degeneracy condition means that for all $ p \in M $ we have the property that if $ \omega(v,w)=0 $ for all $ w \in T_p M $ then $ v=0 $. The skew-symmetry condition means that for all $ p \in M $ we have $ \omega(v,w)=-\omega(w,v) $ for all $ v,w \in T_p M $. The closed condition means that the exterior derivative $ d\omega $ of $ \omega $ is identically zero.  Since each odd-dimensional skew-symmetric matrix is singular, we see that $M$ has even dimension. Every symplectic $2n$-dimensional manifold $ (M,\omega) $ is orientable since the $n$-fold wedge product $ \omega \wedge ...\wedge \omega $ never vanishes. 

\begin{definition} A symplectic manifold $(M,\omega)$ is {\em symplectically aspherical} if
 \begin{equation*}
 \displaystyle{\int_{S^2}{f^*\omega}=0   }
 \end{equation*} 
for every smooth map $ f: S^2 \to M $.
 \end{definition}  

\m Clearly, if $ \pi_2(M)=0 $  then a symplectic manifold $ (M,\omega) $ is symplectically aspherical. However there are symplectically aspherical manifolds with nontrivial $\pi_2$,~\cite{G2, IKRT}.

\begin{rem} The cohomology class $[\omega]$ in a  symplectically aspherical manifold $(M,\omega)$ is aspherical. It follows from classical results of Hopf,~\cite{H}(see also ~\cite[Theorem 8.17]{CLOT},~\cite[Theorem 5.2]{B}).  
\end{rem}

In view of this remark and \propref{p:aspher}, we have the following corollary

\begin{cor}\label{c:sas} Every closed symplectically aspherical manifold is rationally essential.
\end{cor}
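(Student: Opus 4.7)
The plan is to deduce the corollary by applying \propref{p:aspher} to the symplectic class $u = [\omega]$ with $m=2$ and $k=n$, where $\dim M = 2n$. The two hypotheses I need to verify are that $[\omega]$ is aspherical and that $[\omega]^n \neq 0$. The first is exactly what the preceding Remark asserts: $[\omega] = f^*(a)$ for the classifying map $f: M \to K(\pi,1)$ and some class $a$. The second is immediate from the definition of a symplectic form, since non-degeneracy of $\omega$ implies that $\omega^n$ is a nowhere vanishing top form on the closed oriented manifold $M$, hence $\int_M \omega^n \neq 0$ and $[\omega]^n \neq 0$ in top cohomology.

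One point that requires some care is that the de Rham class $[\omega]$ naturally lives in $H^2(M;\R)$, not in $H^2(M;\Q)$, whereas \propref{p:aspher} is stated for rational classes. I would handle this by working with real coefficients throughout: the argument behind \propref{p:aspher} goes through verbatim over $\R$, and since $H_{2n}(K(\pi,1);\R) = H_{2n}(K(\pi,1);\Q) \otimes_{\Q} \R$, nonvanishing of $f_*[M]$ in real homology is equivalent to nonvanishing in rational homology. Explicitly, writing $[\omega] = f^*(a)$, we get $f^*(a^n) = [\omega]^n \neq 0$, so $a^n \neq 0$ in $H^{2n}(K(\pi,1);\R)$, and the Kronecker pairing then gives
\[
\langle a^n, f_*[M] \rangle = \langle f^*(a^n), [M] \rangle = \int_M \omega^n \neq 0,
\]
which forces $f_*[M] \neq 0$ in $H_{2n}(K(\pi,1);\R)$, and hence also in $H_{2n}(K(\pi,1);\Q)$.

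I do not anticipate any substantive obstacle, since the proof amounts to assembling \propref{p:aspher} with the preceding Remark and the elementary observation that $\omega^n$ is a volume form. The only subtle point is the $\Q$-versus-$\R$ coefficient bookkeeping, which is entirely routine.
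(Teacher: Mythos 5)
Your proposal follows exactly the paper's route: the corollary is stated there as an immediate consequence of the Remark (asphericity of $[\omega]$) together with \propref{p:aspher}, applied to $u=[\omega]$ with $m=2$ and $k=n$. Your additional care about the $\Q$-versus-$\R$ coefficient issue, resolved via the Kronecker pairing, is a correct and welcome elaboration of a detail the paper leaves implicit.
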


To proceed, we need the following theorems, see e.g~\cite[page 41]{M}. 

\begin{theorem}
[Lefschetz Hyperplane Theorem]
\label{t:Lefschetz} 
Let $V$  be a complex projective algebraic variety of complex dimension k which lies 
in the complex projective space $ \mathbb{C}P^n $, and let $ P $ 
be a hyperplane in $ \mathbb{C}P^n $ which contains the singular 
points $($if any$)$ of $V$. Then the relative homotopy groups 
$ \pi_r(V, V \cap P ) $ are equal to zero for all $ r<k $.  
\end{theorem}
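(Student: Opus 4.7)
The plan is to follow the classical Morse-theoretic proof via the Andreotti--Frankel theorem, as in Milnor's book. Since $\mathbb{C}P^n \setminus P$ is biholomorphic to $\mathbb{C}^n$, the set $V \setminus P = V \cap (\mathbb{C}P^n \setminus P)$ is a closed analytic subvariety of $\mathbb{C}^n$. The hypothesis that $P$ contains the singular locus of $V$ guarantees that $V \setminus P$ is actually a closed complex submanifold of $\mathbb{C}^n$, hence a Stein manifold of complex dimension $k$.

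The first main step is to invoke the Andreotti--Frankel theorem: any Stein manifold $W$ of complex dimension $k$ has the homotopy type of a CW complex of real dimension at most $k$. One applies Morse theory to the restriction to $W$ of a strictly plurisubharmonic exhaustion function (for instance $z \mapsto |z|^2$ coming from the ambient affine space, perturbed generically to be Morse). The crucial calculation is that at each critical point the real Hessian has at most $k$ negative eigenvalues: strict plurisubharmonicity forces the complex Hessian to be positive definite, and a standard linear-algebra argument then bounds the number of negative real eigenvalues of the symmetric real form by the complex dimension $k$.

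The second step is to transfer this information from the open affine piece $V \setminus (V \cap P)$ to the pair $(V, V \cap P)$. A regular neighborhood $N$ of $V \cap P$ in $V$ has complement $V \setminus \operatorname{int} N$ that is homotopy equivalent to $V \setminus (V \cap P)$, so by the previous step it admits a handle decomposition with handles of index at most $k$. Turning these handles upside down inside the $2k$-dimensional manifold $V$ presents $V$ as obtained from $N$ by attaching handles of index at least $k$. Consequently the relative CW pair $(V, V \cap P)$ has cells only in dimensions $\geq k$, and in particular $H_r(V, V \cap P; \mathbb{Z}) = 0$ for $r < k$.

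Finally, to upgrade this homological vanishing to the stated homotopical conclusion, one first runs the argument in the low-dimensional cases to see that $(V, V \cap P)$ is simply connected (the inclusion $V \cap P \hookrightarrow V$ being $\pi_1$-surjective when $k \geq 2$ and a $\pi_1$-isomorphism when $k \geq 3$), and then applies the relative Hurewicz theorem to conclude $\pi_r(V, V \cap P) = 0$ for $r < k$. The technical heart of the proof, and the main obstacle, is the Morse-index bound in the Andreotti--Frankel step; the handle-duality trick and the Hurewicz step are essentially formal once that eigenvalue count has been established.
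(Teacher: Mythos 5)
The paper itself gives no proof of this statement: it is quoted from Milnor's \emph{Morse Theory} (the reference \cite[page 41]{M}), and your argument is essentially the proof given there --- the Andreotti--Frankel index bound for a generic squared-distance function on the affine part $V\setminus P\subset\mathbb{C}^n$, followed by reading the resulting handle decomposition upside down so that $V$ is obtained from a neighborhood of $V\cap P$ by attaching cells of dimension at least $k$. You have correctly identified the technical heart (the bound of the real Morse index by the complex dimension).

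Two wrinkles are worth flagging. First, the statement allows $V$ to be singular provided its singular points lie in $P$; your handle-duality step takes place ``inside the $2k$-dimensional manifold $V$'' and therefore only covers the nonsingular case. (That is the only case this paper uses, but to get the stated generality one argues as Milnor does: run Morse theory for $-f$ directly on the smooth open part and use that $V\cap P$ is a deformation retract of a neighborhood in the --- merely triangulable --- space $V$.) Second, your final reduction to the relative Hurewicz theorem is both unnecessary and, as written, delicate: the manifolds of interest here have $\pi_1\ne 0$, and relative Hurewicz converts homology vanishing into homotopy vanishing only after passing to universal covers, where $H_r(V,V\cap P;\mathbb{Z})=0$ is no longer the relevant hypothesis. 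You do not need it: once $(V,V\cap P)$ carries a relative CW structure with cells only in dimensions at least $k$, cellular approximation gives $\pi_r(V,V\cap P)=0$ for $r<k$ directly. With that shortcut the proof is complete and agrees with the cited one.
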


Note that $ V \cap P $ is a manifold (i.e. non-singular variety) if $V$ is.

\begin{theorem}[Donaldson~\cite{D}]\label{t:D1}
Let $ L \to V $ be a complex line bundle over a compact symplectic manifold $ (V,\omega) $ with compatible almost-complex structure, and with the first Chern class $\displaystyle{c_1(L)=\Bigg[\frac{\omega}{2\pi}\Bigg] }$. Then there is a constant $ C $ such that for all large $ k $ there is a section $ s $ of $ L^{\otimes k } $ with 
\begin{equation}\label{eq:1}
\displaystyle{|\bar{\partial}s|<\frac{C}{\sqrt{k}}|\partial s|}
\end{equation}
on the zero set of $ s $.
\end{theorem}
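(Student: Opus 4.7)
The plan is to adapt Donaldson's construction of approximately holomorphic sections of high tensor powers of a prequantum line bundle. The starting data is the compatible almost-complex structure $J$ on $V$ together with a Hermitian metric on $L$ whose Chern connection has curvature $-i\omega$; this is possible precisely because $c_1(L)=[\omega/2\pi]$. For large $k$, sections of $L^{\otimes k}$ naturally live on length scales of order $1/\sqrt{k}$, and it is this concentration that drives the desired estimate.

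First I would construct \emph{peak sections}: for each point $p\in V$, produce a section $\sigma_{p,k}$ of $L^{\otimes k}$ modeled, in Darboux-like coordinates around $p$ and in the unitary gauge diagonalizing the connection, on the Bargmann--Fock Gaussian $e^{-k|z|^2/4}$. A direct computation in this model shows that $|\bar\partial\sigma_{p,k}|=O(1)$ while $|\partial\sigma_{p,k}|=O(\sqrt{k})$ on the ball of radius $1/\sqrt{k}$ around $p$; the lack of integrability of $J$ contributes only terms that vanish at $p$ and are therefore of lower order on this scale. Patching such sections over a $1/\sqrt{k}$-dense net $\{p_\alpha\}$ with smooth cutoffs gives a family of globally defined approximately holomorphic sections whose $\bar\partial$ remains small compared to $\partial$ away from the zeros.

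The main obstacle is \emph{quantitative transversality} on the vanishing locus: a priori $|\partial s|$ may degenerate where $s=0$, which would destroy the ratio estimate. To overcome this I would invoke Donaldson's effective Sard-type lemma, which asserts that within a finite-dimensional space of approximately holomorphic perturbations $s_\lambda=\sum_\alpha \lambda_\alpha \sigma_{p_\alpha,k}$ one can choose $\lambda$ of norm at most a fixed constant so that $|\partial s_\lambda|$ is bounded below by $\eta\sqrt{k}$ on $\{s_\lambda=0\}$, for some $\eta$ independent of $k$. The proof of this lemma is the technical heart: it proceeds by a covering argument reducing the global transversality problem to local ones on balls of size $1/\sqrt{k}$, and then applying a polynomial Sard estimate (of Yomdin type) to the approximating holomorphic polynomials.

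Combining the two ingredients, on the zero set of $s=s_\lambda$ one has $|\bar\partial s|\le C'$ from the peak-section estimate and $|\partial s|\ge \eta\sqrt{k}$ from quantitative transversality, yielding
\begin{equation*}
|\bar\partial s|\le \frac{C'}{\eta\sqrt{k}}\,|\partial s|=\frac{C}{\sqrt{k}}\,|\partial s|
\end{equation*}
on $\{s=0\}$ for all sufficiently large $k$, which is the asserted inequality \eqref{eq:1}. I expect the bookkeeping for the patching to be routine; the genuinely hard step is the uniform transversality lemma, since everything else follows from Gaussian concentration and scaling.
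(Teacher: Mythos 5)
The paper offers no proof of this statement: it is quoted as Theorem~\ref{t:D1} directly from Donaldson's paper \cite{D} and used as a black box, so there is no internal argument to compare yours against. That said, your sketch is a faithful outline of Donaldson's actual proof: the connection of curvature $-i\omega$, Gaussian peak sections concentrated at scale $k^{-1/2}$ modeled on the Bargmann--Fock section, the $O(1)$ bound on $\bar\partial s$ coming from the fact that the integrability defect vanishes at the center of each chart and hence contributes only lower-order terms at that scale, and the reduction of the ratio estimate \eqref{eq:1} to a uniform lower bound $|\partial s|\ge\eta\sqrt{k}$ along $\{s=0\}$. The only caveat is that, read as a standalone proof, your argument rests entirely on the quantitative transversality lemma, which you invoke rather than establish; that lemma --- the iterative perturbation over a colored $k^{-1/2}$-net together with the Yomdin-type bound on near-critical values of approximately holomorphic functions --- is the whole technical content of \cite{D}, and the reader should not mistake the outline for a proof of it. Since the theorem is attributed to Donaldson and merely cited in this paper, deferring that step to \cite{D} is the appropriate level of detail here.
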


\begin{theorem}[Donaldson~\cite{D}]\label{t:D2}
Let $ W_k $ be the zero-set of a section $ s $ of $ L^{\otimes k } \to V $ satisfying the conditions of \theoref{t:D1}.
When  $ k $ is sufficiently large the inclusion $ i:W_k \to V $ induces an isomorphism on homotopy groups $ \pi_{p} $ for 
$ p \leq n-2 $ and a surjection on $ \pi_{n-1} $.
\end{theorem}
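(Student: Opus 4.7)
The plan is to mimic the classical proof of the Lefschetz hyperplane theorem, replacing genuine holomorphicity with the almost-holomorphic bound $|\bar\partial s|<\tfrac{C}{\sqrt{k}}|\partial s|$ supplied by \theoref{t:D1}. Recall the classical setup: if $s$ is a holomorphic section of an ample line bundle $L\to V$ vanishing on $W$, then $\phi=-\log|s|^2$ is a strictly plurisubharmonic exhaustion on $V\setminus W$ whose critical points are nondegenerate and have Hessian with at least $n$ positive eigenvalues (with $\dim_{\R}V=2n$). Consequently, the Morse index is at most $n$, so $V\setminus W$ has the homotopy type of a CW complex of real dimension $\le n$. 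A standard argument on the pair $(V,W)$, combined with a tubular neighbourhood and excision, then yields $\pi_r(V,W)=0$ for $r\le n-1$, and the long exact sequence of the pair produces the Lefschetz statement on homotopy groups.

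The first step of the adaptation is to fix a compatible almost-complex structure on $V$ and a Hermitian metric on $L$ whose Chern connection has curvature proportional to $\omega$; after passing to $L^{\otimes k}$ the curvature is of order $k\omega$. Using this metric, define $\phi_k=-\log|s|^2$ on $V\setminus W_k$ and compute the Levi form $i\partial\bar\partial\phi_k$. It decomposes into a ``curvature term'' of order $k\omega$, which is uniformly positive on the $J$-invariant part of the tangent space, and ``error terms'' built from $\partial s$ and $\bar\partial s$. On the critical locus $\{\partial\phi_k=0\}$ the estimate $|\bar\partial s|\le\tfrac{C}{\sqrt{k}}|\partial s|$ makes the $\bar\partial s$ contribution negligible compared with the curvature term once $k$ is sufficiently large, so $i\partial\bar\partial\phi_k$ remains positive on a complex $n$-plane at every critical point.

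The second step is to exploit positivity of the Levi form on a complex $n$-plane to bound the Morse index of $\phi_k$ by $n$; together with the fact that $\phi_k\to\infty$ along $W_k$ (so sublevel sets are compact in $V\setminus W_k$), this produces a CW decomposition of $V\setminus W_k$ of real dimension at most $n$. Along the way one must verify that $W_k$ is a smooth symplectic submanifold of real codimension two, which follows from the transversality that is built into the conditions of \theoref{t:D1}, and that $\phi_k$ is genuinely Morse after a small $C^\infty$-perturbation within the class of almost-holomorphic sections. A tubular neighbourhood of $W_k$ together with excision then converts the CW-dimension bound on $V\setminus W_k$ into the vanishing $\pi_r(V,W_k)=0$ for $r\le n-1$, and the homotopy long exact sequence of $(V,W_k)$ delivers the isomorphism on $\pi_p$ for $p\le n-2$ and the surjection on $\pi_{n-1}$.

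The main obstacle is the Morse-theoretic estimate at the critical points: one has to show that for all sufficiently large $k$ the function $\phi_k$ is Morse and its Hessian carries at least $n$ positive eigenvalues. This is exactly where the precise interplay between Donaldson's bound $|\bar\partial s|<\tfrac{C}{\sqrt{k}}|\partial s|$ and the scaled curvature of $L^{\otimes k}$ must be controlled quantitatively, and it is also the step that forces the use of the large-$k$ hypothesis rather than a single fixed power.
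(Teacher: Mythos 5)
The first thing to note is that the paper does not prove this statement at all: it is imported verbatim from Donaldson~\cite{D}, so your attempt can only be measured against Donaldson's own argument, whose general outline (Andreotti--Frankel-type Morse theory applied to $\log|s|^2$) you have correctly identified. Within that outline, however, there are two genuine gaps. The first is analytic: the hypothesis you carry over from \theoref{t:D1} controls $|\bar\partial s|$ \emph{only on the zero set} $W_k$, whereas every critical point of your function $\phi_k=-\log|s|^2$ lies in $V\setminus W_k$ (indeed $\phi_k\to+\infty$ along $W_k$), which is exactly where the stated hypothesis says nothing. Worse, the Levi form $\partial\bar\partial\log|s|^2$ involves $\nabla\bar\partial s$, a quantity \theoref{t:D1} does not control even on $W_k$. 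The way the inequality actually enters Donaldson's proof is instructive: since $L$ is a line bundle, $|\langle\partial_v s,s\rangle|=|\partial_v s|\,|s|$, so at any critical point of $|s|^2$ with $s\neq 0$ one finds $|\partial s|=|\bar\partial s|$; hence an inequality $|\bar\partial s|<|\partial s|$ valid on the region $\{|s|\le\eta\}$ forces $\nabla s=0$ at any critical point there, which quantitative transversality then excludes, while on $\{|s|>\eta\}$ one beats the error terms by the curvature term. Carrying this out requires uniform bounds on $s$, $\nabla s$, $\nabla\nabla s$, $\bar\partial s$, $\nabla\bar\partial s$ over all of $V$ (the full asymptotically holomorphic package that Donaldson's construction delivers), and none of this can be recovered from the on-the-zero-set inequality you assume.

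The second gap is topological: even granting the index bound, your passage from ``$V\setminus W_k$ has the homotopy type of a CW complex of dimension $\le n$'' to ``$\pi_r(V,W_k)=0$ for $r\le n-1$'' by ``tubular neighbourhood and excision'' does not work, because excision fails for homotopy groups (that failure is precisely what Blakers--Massey measures). The CW-dimension bound on the complement yields, via Poincar\'e--Lefschetz duality, only the \emph{homology} statement $H_r(V,W_k)=0$ for $r\le n-1$, which is strictly weaker than the homotopy statement asserted in \theoref{t:D2}. To get the homotopy statement you must run Morse theory on $\psi_k=\log|s|^2=-\phi_k$ itself: its nondegenerate critical points have index $2n-\lambda\ge n$ whenever $\phi_k$ has index $\lambda\le n$, so $V$ is obtained from a tubular neighbourhood of $W_k$ by attaching handles of index $\ge n$; cellular approximation then gives $\pi_r(V,W_k)=0$ for $r\le n-1$, and the exact sequence of the pair yields the claimed isomorphisms on $\pi_p$ for $p\le n-2$ and the surjection on $\pi_{n-1}$. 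In short, your strategy is the right one in outline, but both the analytic input and the topological endgame need repair along these lines.
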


In view of \theoref{t:D1} and \theoref{t:D2} we obtain the following corollary

\begin{cor}\label{c:sym}
Let $(M, \omega)$ be a closed symplectic manifold of dimension ${2n}$ such that the cohomology class $[\omega]$ is integral.   Then there exists a symplectic  submanifold $V$ of $M$ of codimension 2 such that inclusion $ i: V \to M$ induces an isomorphism on homotopy groups $ \pi_p $ for $ p \leq n-2 $ and a surjection on $ \pi_{n-1}$. Furthermore, the homology class $[V]$ in $M$ is the Poincar\'e dual to a class $r[\omega]$ for some integer $r$.
\end{cor}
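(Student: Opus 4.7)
The plan is to apply the two Donaldson theorems directly, after a harmless rescaling that adjusts the normalization factor $2\pi$. Since $[\omega]\in H^2(M;\R)$ is integral, the class $[\omega]$ lies in the image of $H^2(M;\Z)$, so I first replace $\omega$ by the cohomologous-in-spirit form $\omega' = 2\pi\omega$. The form $\omega'$ defines the same almost complex structures up to homotopy, the same set of symplectic submanifolds of $M$, and now satisfies $[\omega'/(2\pi)] = [\omega]\in H^2(M;\Z)$. I then choose a complex line bundle $L\to M$ with $c_1(L)=[\omega'/(2\pi)]=[\omega]$, which exists since the right-hand side is integral, and I fix a compatible almost complex structure $J$ on $(M,\omega')$ (the space of such structures is non-empty and contractible).

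Next, I apply \theoref{t:D1} to $L\to M$ to produce, for every sufficiently large $k$, a section $s_k$ of $L^{\otimes k}$ satisfying the estimate $|\bar\partial s_k| < (C/\sqrt{k})\,|\partial s_k|$ on the zero set $W_k = s_k^{-1}(0)$. Two consequences follow. First, because $|\partial s_k|$ strictly dominates $|\bar\partial s_k|$, the section $s_k$ is transverse to the zero section, so $W_k$ is a smooth closed submanifold of real codimension $2$. Second, the same estimate forces the tangent bundle $TW_k$ to be arbitrarily close to $J$-invariant for large $k$, and hence $\omega'$ restricts to a non-degenerate form on $W_k$; thus $W_k$ is a symplectic submanifold of $(M,\omega')$, and equivalently of $(M,\omega)$. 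Setting $V := W_k$ for a large enough $k$, \theoref{t:D2} then says that $i : V\to M$ induces isomorphisms on $\pi_p$ for $p\le n-2$ and a surjection on $\pi_{n-1}$, which is exactly the homotopical conclusion.

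It remains to compute the homology class $[V]$. Since $V$ is the transverse zero locus of a section of $L^{\otimes k}$, the Poincar\'e dual of $[V]\in H_{2n-2}(M;\Z)$ is the Euler class of $L^{\otimes k}$, namely
\[
\mathrm{PD}([V]) = c_1(L^{\otimes k}) = k\,c_1(L) = k[\omega].
\]
Hence $[V]$ is Poincar\'e dual to $r[\omega]$ with $r=k\in\Z$, completing the verification.

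The main subtlety is the claim that the zero set $V$ is a symplectic submanifold; this is not made explicit in the quoted statements of \theoref{t:D1} and \theoref{t:D2}, but it is exactly the point of the estimate $|\bar\partial s_k|<(C/\sqrt{k})|\partial s_k|$ in Donaldson's construction, and any careful invocation of that construction must note this. The remaining steps — the rescaling by $2\pi$, the choice of a compatible $J$, and the Poincar\'e duality identification of $[V]$ with $c_1(L^{\otimes k})$ — are routine.
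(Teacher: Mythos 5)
Your proposal is correct and follows essentially the same route as the paper: normalize so that $c_1(L)=[\omega]$, invoke \theoref{t:D1} to get the zero set $V=W_k$, use the estimate \eqref{eq:1} to see that $V$ is a symplectic submanifold of codimension $2$, identify $\mathrm{PD}([V])$ with $c_1(L^{\otimes k})=k[\omega]$, and quote \theoref{t:D2} for the homotopy-group statement. The extra details you supply (transversality of $s_k$, near $J$-invariance of $TW_k$) only make explicit what the paper's one-line appeal to inequality \eqref{eq:1} leaves implicit.
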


\begin{proof} The proof follows from \theoref{t:D1} and \theoref{t:D2} with $\omega$ normalized such that $c_1(L)=[\omega]$. Let $V$ be the zero-set of a section $s$ of $L^{\otimes k} \to M $ as in 
\theoref{t:D1}. Then inequality  \eqref{eq:1} guarantees the existence of symplectic structure on $V$. So $V$ is a symplectic submanifold  of $M$ of codimension 2. The homology class of $V$ is Poincar\'e dual to the first Chern class of $L^{\otimes k}$ up to a multiplicative constant $r$. Finally, according to \theoref{t:D2} the inclusion $ i:V \to M $ induces an isomorphism on homotopy groups  $\pi_p$ for $p\leq n-2$ and a surjection on $\pi_{n-1}$.
\end{proof}
 
\section{Classes of essential manifolds}{\label{s4}

 \begin{theorem}\label{t:alg}
 Assume that $ M $ is a complex projective algebraic manifold of $($real$)$  dimension $2k$  which lies in the complex projective space $ \C P^{N} $. Suppose also that $M$ is symplectically aspherical. Then for every integer $m$ with
 $ 2 \leq m \leq k $ there exists a rationally essential algebraic manifold $ V$  of  dimension $ 2m $ with fundamental group  isomorphic to $\pi_1(M)$.
 \end{theorem}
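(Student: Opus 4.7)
The plan is to construct $V$ by iteratively taking transverse hyperplane sections of $M$, controlling the fundamental group via \theoref{t:Lefschetz} and the symplectic asphericity by restricting the ambient K\"ahler form. Set $M_0=M$, and inductively choose a hyperplane $P_j\subset\C P^N$ so that $M_j:=M_{j-1}\cap P_j$ is a smooth subvariety (the existence of such $P_j$ follows from Bertini's theorem applied to the smooth projective variety $M_{j-1}$). Iterating $k-m$ times produces a smooth complex projective algebraic manifold $V:=M_{k-m}$ of complex dimension $m$, hence real dimension $2m$.

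To compare fundamental groups, note that at the $j$-th step $M_{j-1}$ has complex dimension $k-j+1$, and \theoref{t:Lefschetz} yields $\pi_r(M_{j-1},M_j)=0$ for $r<k-j+1$. The long exact sequence of the pair shows that the inclusion $M_j\hookrightarrow M_{j-1}$ induces an isomorphism on $\pi_1$ whenever $k-j+1\ge 3$. The last step is $j=k-m$, where $k-j+1=m+1\ge 3$ by the hypothesis $m\ge 2$; hence every step preserves $\pi_1$, and composing gives $\pi_1(V)\cong\pi_1(M)$.

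For rational essentiality, I would equip each $M_j$ with the K\"ahler form $\omega_j$ obtained by restricting the Fubini--Study form of $\C P^N$. Since $M$ is symplectically aspherical with respect to $\omega_0$, and since for any smooth $f\colon S^2\to V$ the composition $i\circ f\colon S^2\to M$ with the inclusion $i\colon V\hookrightarrow M$ satisfies
$$\int_{S^2}f^\ast\omega_V \;=\; \int_{S^2}f^\ast i^\ast\omega_0 \;=\; \int_{S^2}(i\circ f)^\ast\omega_0 \;=\; 0,$$
the manifold $(V,\omega_V)$ is also symplectically aspherical. Then \corref{c:sas} immediately implies that $V$ is rationally essential, completing the argument.

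The only real subtlety I anticipate is bookkeeping rather than conceptual: one must verify that the generic hyperplanes can be chosen compatibly at every stage so that Bertini yields smoothness throughout the iteration, and that the Lefschetz hypothesis (that $P$ contain any singular locus of each intermediate variety) is trivially satisfied because each $M_{j-1}$ is smooth. Everything else is a direct concatenation of the cited results, with the hypothesis $m\ge 2$ entering precisely to ensure that the final Lefschetz step retains the fundamental group.
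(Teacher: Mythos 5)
Your proposal is correct and follows essentially the same route as the paper: repeated hyperplane sections, \theoref{t:Lefschetz} plus the homotopy exact sequence of the pair to preserve $\pi_1$ (with $m\ge 2$ entering exactly where you say), and symplectic asphericity of the restricted K\"ahler form feeding into \corref{c:sas} for rational essentiality. The only cosmetic difference is that you make the genericity/Bertini step explicit, which the paper leaves implicit.
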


 \begin{proof}
The case $m=k$ is the \corref{c:sas}.  By induction, it suffices to prove the theorem for $m=k-1$. Indeed, assume that $\dim M=2k>4$ and let $ V= M \cap \C P^{N-1}$. If we prove that $V$ is a rationally essential complex algebraic manifold with $\dim V=2k-2>4$ and the fundamental group $\pi=\pi_1(M)$, we apply the previous argument for $V$ instead of $M$. Because of the \theoref{t:Lefschetz}, $ \pi_r(M,V)=0$ for $ r<k-1$. From the exactness of the homotopy sequence 
 \[ 
\pi_2(M,V) \to \pi_1(V) \to \pi_1(M) \to \pi_1(M,V)
\]
 it follows that 
\[ 
\pi_1(M) \simeq \pi_1(V) \simeq \pi  \text{ since }   \pi_2(M,V) \simeq \pi_1(M,V)\simeq 0. 
\]
 Hence $V$ is a complex algebraic manifold with fundamental group isomorphic to $ \pi $, and $\dim V=\dim M-2$. 
 \m
It remains to prove that $V$ is rationally essential. But this follows from \corref{c:sas} because the induced K\"ahler form on $V$ is aspherical. 
 \end{proof}  

 \begin{theorem}\label{t:sympl} Let $ (M,\omega) $ be a closed symplectically aspherical manifold of dimension $2n > 2 $ with fundamental group $ \pi $.
   Then for every $k$ such that $ 2 \le k \le n $ there exists a symplectically aspherical manifold $V$ of  dimension $2k$ with fundamental group isomorphic to $ \pi $. \end{theorem}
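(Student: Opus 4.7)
The plan is to prove the theorem by a descending induction on $k$, mirroring the proof of \theoref{t:alg} but with Donaldson's symplectic hypersurface construction \corref{c:sym} playing the role of the Lefschetz Hyperplane Theorem. The base case $k=n$ is trivial: take $V=M$. For the inductive step I would show that, given any closed symplectically aspherical manifold $(M',\omega')$ of dimension $2k'$ with $k'\ge 3$ and $\pi_1(M')\cong \pi$, one can produce such a manifold of dimension $2k'-2$; iterating as long as $k'\ge 3$ lets us stop at any $k\ge 2$.

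The main obstacle is that \corref{c:sym} requires the symplectic class to be integral, whereas nothing in the hypothesis pins down $[\omega']$. The fix is a perturbation within the ``symplectically aspherical stratum'' of $H^2(M';\R)$. Let $K\subset H^2(M';\R)$ be the subspace of classes that annihilate the image of the Hurewicz map $\pi_2(M')\to H_2(M';\Z)$; these are exactly the symplectically aspherical cohomology classes, and $[\omega']\in K$ by assumption. Since that Hurewicz image lies in the finitely generated abelian group $H_2(M';\Z)$, the subspace $K$ is cut out by rational linear equations, so $K\cap H^2(M';\Q)$ is dense in $K$. I would choose a rational class $\eta\in K$ close to $[\omega']$, represent it by a closed $2$-form $\omega''$ that is $C^\infty$-close to $\omega'$ (for instance by harmonic representatives for some compatible metric), and then rescale $\omega''$ by a positive integer so that $[\omega'']$ becomes an integral class. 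Non-degeneracy being an open condition, $\omega''$ remains symplectic for a sufficiently small perturbation; and $[\omega'']\in K$ is symplectically aspherical by construction.

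Applying \corref{c:sym} to $(M',\omega'')$ yields a codimension-$2$ symplectic submanifold $i\colon V\hookrightarrow M'$ with induced symplectic form a multiple of $i^*\omega''$, such that $i_*\colon\pi_p(V)\to\pi_p(M')$ is an isomorphism for $p\le k'-2$. The hypothesis $k'\ge 3$ then gives $\pi_1(V)\cong\pi$. The symplectic asphericity of $V$ is automatic from that of $(M',\omega'')$: for any smooth $g\colon S^2\to V$,
$$
\int_{S^2} g^*(i^*\omega'') \;=\; \int_{S^2}(i\circ g)^*\omega'' \;=\; 0.
$$
Iterating this reduction takes dimension $2n$ down to $2k$ for every $k$ with $2\le k\le n$; the constraint $k'\ge 3$ needed at each step is precisely $k+1\ge 3$ at the final reduction, matching the hypothesis $k\ge 2$. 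The only non-routine ingredient is the rational approximation of $[\omega']$ inside the aspherical stratum $K$; everything else follows directly from \corref{c:sym}.
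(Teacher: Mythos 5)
Your proposal is correct and follows essentially the same route as the paper: reduce to the codimension-two step, arrange for $[\omega]$ to be integral, and apply Donaldson's hypersurface theorem via \corref{c:sym} to control $\pi_1$ and inherit symplectic asphericity by pulling back the form. The only difference is that the paper simply cites \cite[Prop.~1.5]{IKRT} for the integrality reduction, whereas you spell out that argument (rational density in the annihilator of the Hurewicz image plus openness of non-degeneracy), which is exactly the content of the cited proposition.
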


 \begin{proof} We prove the theorem by induction. Similarly to the proof of \theoref{t:alg}, it suffices to prove the case $k=n-1$. Without loss of generality, we can assume that the cohomology class $[\omega]$ is integral (see \cite[Prop. 1.5]{IKRT}). Let $M$ be a manifold as in \corref{c:sym}. Then, for $n>2$, the inclusion $ i:V\to M$ induces an isomorphism on the fundamental groups $ \pi_1(V) \to \pi_1(M) $.  Now, $V$ is a symplectic manifold with symplectic structure $ i^*\omega $ induced from $M$. It is clear that 
$$ 
\int_{S^2}{g^*i^*\omega}=0 
$$ 
for every map $ g:S^2 \to V$. Thus $ (V,i^*\omega) $ is a symplectically aspherical manifold of dimension $2n-2$ with $\pi_1(V)=\pi$. 
\end{proof}

 \begin{definition} A symplectic manifold $ (M^{2n},\omega)$ has the {\em hard Lefschetz property} (HLP) if the map 
$$
 L^k_{[\omega]}:H^{n-k}_{DR}(M^{2n}) \to H^{n+k}_{DR}(M^{2n}), \qquad
L^k_{[\omega]}([x])=[\omega^k \wedge x] 
$$
 is an isomorphism for all $ k=0,\dots,n $.
 \end{definition}

 For example, the Hard Lefschetz Theorem says that every K\"ahler manifold has HLP, see~\cite[page 122]{GH}.

 \begin {theorem}\label{t:hard}
Let $ (M,\omega) $ be a  symplectically aspherical manifold of dimension $2n>2$ with fundamental group $ \pi $ and having HLP. Then for every $ m $ such that $ 2 \leq m \leq n $ there exists a symplectically aspherical manifold $ (V,\eta) $ of dimension $ 2m $ with fundamental group isomorphic to $ \pi $ and having HLP.
 \end{theorem}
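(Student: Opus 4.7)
The plan is to imitate the proof of Theorem~\ref{t:sympl}, reducing by induction on $n - m$ to the single step $m = n-1$ with $n \ge 3$; the new content compared with Theorem~\ref{t:sympl} is that the Hard Lefschetz Property must also descend to the Donaldson hypersurface at each step.

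To carry out the step from $M$ of dimension $2n$ to a candidate $V$ of dimension $2(n-1)$, I first normalize so that $[\omega]$ is integral (harmless, since HLP depends only on the cohomology class and is preserved under positive rational rescaling of the symplectic form) and then invoke Corollary~\ref{c:sym} to obtain a codimension-$2$ symplectic submanifold $i\colon V \hookrightarrow M$ with $\pi_1(V) \cong \pi$ and with $[V] \in H_{2n-2}(M;\Q)$ Poincar\'e-dual to $r[\omega]$ for some nonzero integer $r$. Setting $\eta = i^*\omega$, the symplectic asphericity of $(V,\eta)$ is immediate from $\int_{S^2} g^*\eta = \int_{S^2}(i\circ g)^*\omega = 0$, exactly as in the proof of Theorem~\ref{t:sympl}.

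The main task is to verify that $(V,\eta)$ has HLP, i.e.\ that $L^k_{[\eta]}\colon H^{n-1-k}(V;\Q) \to H^{n-1+k}(V;\Q)$ is an isomorphism for each $0 \le k \le n-1$. I combine two ingredients. First, the Lefschetz hyperplane principle applied to the Donaldson pair: the $\pi_p$-statement of Corollary~\ref{c:sym}, together with the relative Hurewicz theorem and Poincar\'e--Lefschetz duality, forces $i^*\colon H^p(M;\Q) \to H^p(V;\Q)$ to be an isomorphism for $p \le n-2$. Second, the projection formula $i_!(i^*x) = x \cup i_!(1) = r\cdot x \cup [\omega]$, where $i_!$ is the codimension-$2$ Gysin map. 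For $k \ge 1$ (the case $k = 0$ is trivial), every $\beta \in H^{n-1-k}(V;\Q)$ has the form $\beta = i^*\alpha$ for a unique $\alpha \in H^{n-1-k}(M;\Q)$; if $L^k_{[\eta]}(\beta) = i^*([\omega]^k \cup \alpha) = 0$, then applying $i_!$ yields $r\cdot L^{k+1}_{[\omega]}(\alpha) = 0$ in $H^{n+k+1}(M;\Q)$. Since HLP on $M$ makes $L^{k+1}_{[\omega]}$ an isomorphism and $r \ne 0$, we conclude $\alpha = 0$, hence $\beta = 0$. Poincar\'e duality on the closed $(2n-2)$-manifold $V$ equalizes the source and target dimensions, so injectivity upgrades to an isomorphism.

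The main obstacle I anticipate is precisely this HLP descent: nothing in Corollary~\ref{c:sym} directly controls the cup-product structure of $V$, so one needs a bridge back to $M$. The projection formula is that bridge---it converts a would-be failure of the Lefschetz isomorphism on $V$ at level $k$ into a failure on $M$ at level $k+1$, which the hypothesis rules out. Once this step is established for $m = n-1$, the induction proceeds without change: each successive Donaldson hypersurface is symplectically aspherical and has HLP, and the process terminates at the desired dimension $2m$, bottoming out at $2m = 4$ when $m = 2$.
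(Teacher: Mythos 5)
Your proposal is correct and follows essentially the same route as the paper: reduce to the step $m=n-1$ via the Donaldson hypersurface of Corollary~\ref{c:sym}, use that $i^*$ is an isomorphism in degrees $\le n-2$ to identify $H^{n-1-k}(V)$ with $H^{n-1-k}(M)$, and then push a hypothetical kernel element of $L^k_{[\eta]}$ back to $M$ to contradict the bijectivity of $L^{k+1}_{[\omega]}$. Your Gysin-map/projection-formula computation $i_!(i^*x)=r\,x\cup[\omega]$ is exactly the cohomological dual of the paper's cap-product identity $i_*([V]\frown i^*x)=r\,[M]\frown(x\smile\omega)$, so the two arguments coincide.
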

   
 \begin{proof}
We follow the proof of \theoref{t:sympl} and must prove that the manifold $V$ as in \theoref{t:sympl}  has HLP.

First, we need to show that $ L^k_{[\omega^*]}:H^{n-1-k}(V) \to H^{n-1+k}(V) $ is an isomorphism for all $ k=0,\dots,n-1 $ where $ \omega^* $ is the pullback of $ \omega $ under inclusion $ i:V \to M $. We need to consider separately the case when $k=0$. So fix any $ k $ such that $ 0 < k \leq n-1 $.  Since
$H^{n-1-k}(V)$ and $H^{n-1+k}(V) $ have the same dimension, it suffices to show that $ L^k_{[\omega*]} $ is a monomorphism. Consider the  following commutative diagram
$$
\CD
 H^{n-1-k}(M) @>L^k_{[\omega]}>> H^{n-1+k}(M) @>{\smile \omega}>> H^{n+1+k}(M)\\
@V i_1^* VV    @VV i_2^* V   @.\\
H^{n-1-k}(V)@>L^k_{[\omega^*]}>> H^{n-1+k}(V) @.
\endCD
$$
where $ L^k_{[\omega]} $ is a monomorphism because $ L^{k+1}_{[\omega]} $ is
 an isomorphism. It follows from \corref{c:sym}, and Whitehead theorem~(see \cite[page 399]{S}) that $i_1^*$ is an isomorphism.
 Hence it suffices to show that $ i_2^* $ is a monomorphism on the $ \im(L^k_{[\omega]}) $. Assume that $ \alpha \in H^{n-1-k}(M) $ is nontrivial and $ i_2^*(\alpha \smile \omega^k)=0 $. Then 
\begin{equation*}
\begin{aligned}
 0 \neq & r([M] \frown(\alpha \smile \omega^{k+1}))=
 r([M] \frown(\alpha \smile \omega^k)) \frown \omega\\
 = & r([M] \frown \omega) \frown (\alpha \smile \omega^k)=
 i_*([V]) \frown ( \alpha \smile \omega^k)\\
= & i_*([V] \frown i_2^*(\alpha \smile \omega^k))=0. 
\end{aligned}
\end{equation*}
This is a contradiction. So  $ L^k_{[\omega^*]} $ is an isomorphism for all $ k=1,\dots,n-1 $. 

If $k=0$ then it is obvious that $L^0_{[\omega^*]}:H^{n-1}(V) \to H^{n-1}(V)$ is an isomorphism. Thus $ V $ is a  symplectically aspherical manifold of dimension $ 2n-2 $ with fundamental group $ \pi $ having the HLP. 

\m Now we can apply the above procedure to $V$, and the result follows by induction. 
 \end{proof}

\begin{defteor}[Lupton-Oprea~\cite{LO}]\rm
A  manifold $ M $ of dimension $ 2n $ is {\em cohomologically symplectic} (or, briefly, c-symplectic) if there exists a closed differential 2-form $ \omega $ on $ M $ such that $ [\omega]^n \neq 0$ .
\end{defteor}

Clearly, not all c-symplectic manifolds are symplectic. For example, $ \C P^2\#\C P^2 $ is c-symplectic but is not symplectic \cite{G1}. 

\begin{theorem}
Let $ (M,\omega) $ be a  c-symplectic manifold of dimension $2n>2$ with fundamental group $ \pi $ and with aspherical c-symplectic form. Then for every $ m $ such that $ 2 \leq m \leq n $ there exists a c-symplectic
manifold $ (V,\eta) $ of dimension $ 2m $ with fundamental group isomorphic to $ \pi $ and  with aspherical c-symplectic form. 
\end{theorem}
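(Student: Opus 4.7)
The plan is to exploit the fact that c-symplecticity is a purely cohomological condition, so unlike in the proofs of \theoref{t:alg}--\theoref{t:hard} there is no need to realise $V$ as a submanifold of $M$. Accordingly, I would build $V$ directly via \theoref{main}, bypassing any Lefschetz- or Donaldson-type geometric argument.

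First I would extract the algebraic data. By hypothesis $[\omega] = f^*(a)$ for some $a \in H^2(K(\pi,1);\R)$, where $f$ is a classifying map for $M$. The c-symplectic condition $[\omega]^n\ne 0$ forces $f^*(a^n)\ne 0$ and hence $a^n\ne 0$ in $H^{2n}(\pi;\R)$. For any $m$ with $2\le m\le n$ the factorisation $a^n = a^{n-m}\cdot a^m$ then forces $a^m\ne 0$ in $H^{2m}(\pi;\R)$.

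Next I would descend to rational coefficients. Since $H_{2m}(\pi;\R)\cong H_{2m}(\pi;\Q)\otimes_\Q\R$ and the universal coefficient theorem identifies $H^{2m}(\pi;\R)$ with $\Hom(H_{2m}(\pi;\Z),\R)$, the nonzero functional $a^m$ must be nonzero on some rational class, so I can select $\beta\in H_{2m}(\pi;\Q)$ with $\langle a^m,\beta\rangle\ne 0$. Then \theoref{main} supplies a closed, connected, orientable $2m$-manifold $V$, a nonzero integer $k$, and a classifying map $h\colon V\to K(\pi,1)$ with $h_*[V]=k\beta$ and $h_*\colon\pi_1(V)\to\pi$ an isomorphism.

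To finish I would set $[\eta]=h^*(a)\in H^2(V;\R)$, pick a representing closed 2-form $\eta$ via the de Rham isomorphism, and compute
\begin{equation*}
\langle [\eta]^m,[V]\rangle = \langle h^*(a^m),[V]\rangle = \langle a^m,h_*[V]\rangle = k\langle a^m,\beta\rangle\ne 0,
\end{equation*}
so that $(V,\eta)$ is c-symplectic; the class $[\eta]$ is aspherical by construction, and $\pi_1(V)\simeq\pi$ by the choice of $h$. The one mildly delicate point, and where I expect the proof to require its only care, is the descent from the real class $a$ to a \emph{rational} homology class $\beta$: since the c-symplectic form $\omega$ need not be a rational de Rham class, $a$ itself need not be rational, and without the extension-of-scalars observation one cannot legally invoke \theoref{main}.
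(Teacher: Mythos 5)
Your proof is correct, but it takes a genuinely different route from the paper. The paper stays parallel to its symplectic and hard-Lefschetz arguments: it realizes a multiple of the Poincar\'e dual of $[\omega]$ by a codimension-two submanifold $N\subset M$, checks $(i^*\omega)^{n-1}\ne 0$ by a cap-product computation, surgers $N$ to an $N'$ with the correct fundamental group using bordism invariance of $\langle\omega^{n-1},\cdot\rangle$ to preserve c-symplecticity, and then descends one dimension at a time by induction. You instead observe that both hypotheses (c-symplectic, aspherical form) are carried entirely by a class $a\in H^2(\pi;\R)$ with $a^n\ne 0$, deduce $a^m\ne 0$ for all $m\le n$, and invoke \theoref{main} to manufacture $V$ in dimension $2m$ in a single step. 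Your route is shorter, needs no induction, and notably avoids the one genuinely shaky step in the paper's argument: the existence of a submanifold dual to $r[\omega]$ tacitly requires $[\omega]$ (equivalently $a$) to be rational, whereas a c-symplectic class is a priori only real; your descent to a rational homology class $\beta$ via the universal coefficient theorem (using that $\R$ is divisible, so $a^m\ne 0$ pairs nontrivially with some non-torsion integral class) handles exactly this point cleanly. What the paper's heavier approach buys is uniformity of exposition with Theorems \ref{t:sympl} and \ref{t:hard} and a $V$ that comes equipped with a map to $M$, neither of which is needed for the statement. One small caveat: \theoref{main} is applied in dimension $2m\ge 4$, where its surgery argument is valid, so there is no low-dimensional obstruction.
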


\begin{proof}
Let $ f:M \to K(\pi,1) $ be a classifying map for $ M $. Then
$ \omega=f^*a $ for some $ a \in H^2(K(\pi,1)) $.   There exists  a $ (2n-2) $-dimensional submanifold $ N $ of $ M $ such that $ [N]=r\eta $ for some $ r \in \mathbb{Z} $, where $ \eta=PD([\omega])=[M] \frown \omega $.   Let 
$ i:N \to M $ be the inclusion of $ N $ into $ M $. We want to show that $ (i^*\omega)^{n-1}\neq 0 $. Suppose that $ (i^*\omega)^{n-1}= 0 $. Then
\begin{equation*}
\begin{aligned}
 0 \neq & r([M] \frown \omega^n)=r([M]\frown \omega \frown \omega^{n-1})=\\
 = & i_*([N]) \frown \omega^{n-1}=i_*([N] \frown (i^*\omega)^{n-1})=0 .
\end{aligned}
\end{equation*}
This is a contradiction. Hence $ (i^*\omega)^{n-1} \neq 0 $.
By using surgery we can construct a manifold $ N' $ and a map $ i':N' \to M $ that induces an isomorphism  on the fundamental groups. Moreover,  there exist a manifold $ W $ with $\partial W=N \sqcup N' $ and a map $ g:W \to M $ that extends $ i $ and $ i' $.  In other words, the singular manifolds $i: N \to M$ and $ i': N' \to M$ are bordant:
\[\renewcommand{\arraystretch}{2}
\begin{array}{ccccc}
N &\xrightarrow{\text{$j$}}&W&\xleftarrow{\text{$j'$}}&N'\\
&\searrow{\text{$i$}}&\Big\downarrow{\text{$g$}}&\swarrow{\text{$i'$}}\\
&&M
\end{array}\]
where $j$ and $j'$ are the inclusions.   Thus $ i'_*([N'])=i_*([N]) $.  Now
\begin{equation*}
\begin{aligned}
 \langle(i'^*\omega)^{n-1},[N']\rangle=\langle\omega^{n-1},i'_*([N'])\rangle=\\
 = \langle\omega^{n-1},i_*([N]\rangle=\langle(i^*\omega)^{n-1},[N]\rangle\neq 0 ,
\end{aligned}
\end{equation*}
so $ (i'^*\omega)^{n-1} \neq 0 $.
Thus $ (N',i'^*\omega) $ is a c-symplectic manifold of dimension $ 2n-2 $ with fundamental group isomorphic to $ \pi $. Clearly, $ i'^*\omega $ is an aspherical form because
$ i'^*\omega=(f\circ i')^*a $. The result follows by induction.
\end{proof}

\begin{prop}\label{p:csym}
There is an example of a rationally essential 4-dimensional c-symplectic manifold $M$ which is not symplectic.
\end{prop}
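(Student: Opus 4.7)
The plan is to exhibit the explicit example $M = \T^4 \# \cpp$ and to verify the three required properties in turn.

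First I would check that $M$ is rationally essential. By van Kampen, $\pi_1(M) \cong \pi_1(\T^4) * \pi_1(\cpp) = \Z^4$. Since $\T^4$ is a $K(\Z^4,1)$, the collapsing map $c: M \to \T^4$ that crushes the $\cpp$--summand to a point may be taken as a classifying map, and it has degree one. Hence $c_*[M] = [\T^4] \ne 0$ in $H_4(K(\Z^4,1);\Q)$, so $M$ is rationally essential.

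Next, I would show $M$ is c-symplectic. Let $\omega_0$ be the standard symplectic form on $\T^4$ and set $\omega := c^*[\omega_0] \in H^2(M;\Q)$. Then
\[
\langle \omega^2, [M] \rangle \;=\; \langle c^*[\omega_0^2], [M] \rangle \;=\; \langle [\omega_0^2], c_*[M] \rangle \;=\; \langle [\omega_0^2], [\T^4] \rangle \;\ne\; 0,
\]
so $\omega^2 \ne 0$ in $H^4(M;\Q)$, and $M$ is c-symplectic.

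The main step, and the one where all the work lies, is to show that $M$ is not symplectic. I would invoke Wu's theorem: a closed oriented 4-manifold admits an almost complex structure compatible with the orientation if and only if there exists an integral class $c \in H^2(M;\Z)$ with $c \equiv w_2(M) \pmod 2$ and $c^2 = 2\chi(M) + 3\sigma(M)$. Compute $\chi(M) = 0 + 3 - 2 = 1$ and $\sigma(M) = 0 + 1 = 1$, so the target value is $5$. Under the splitting $H^2(M;\Z) = H^2(\T^4;\Z) \oplus H^2(\cpp;\Z)$ (and correspondingly mod $2$), we have $w_2(\T^4) = 0$ (as $\T^4$ is parallelizable) and $w_2(\cpp) \equiv h \pmod 2$, where $h$ generates $H^2(\cpp;\Z)$. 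Hence any lift has the form $c = (2d,\,(2k{+}1)h)$ with $d \in H^2(\T^4;\Z)$ and $k \in \Z$. Using orthogonality of the two summands under the intersection form of $M$, together with the fact that the intersection form on $\T^4$ is even (so $d\cdot d \in 2\Z$ and hence $4d\cdot d \in 8\Z$) and $(2k{+}1)^2 = 4k(k{+}1) + 1 \equiv 1 \pmod 8$, we obtain $c^2 \equiv 1 \pmod 8$. Since $5 \not\equiv 1 \pmod 8$, no such $c$ exists. Thus $M$ admits no almost complex structure and, a fortiori, no symplectic structure.

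The main obstacle is this last step: identifying the right obstruction (Wu's integrality/parity constraint) and carefully running the mod $8$ argument; the first two steps are essentially formal consequences of the connected-sum construction with an aspherical factor.
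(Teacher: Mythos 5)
Your overall strategy mirrors the paper's: collapse onto an aspherical summand to get rational essentiality, pull back a degree-two class to get c-symplecticity, and use the Ehresmann--Wu criterion $c_1^2=2\chi+3\tau$ to exclude almost complex (hence symplectic) structures. The first two steps are fine, and your mod $8$ computation is correct as far as it goes. The gap is in the last step, and it is fatal for the example you chose: Wu's criterion only obstructs almost complex structures \emph{compatible with the chosen orientation}, while a symplectic form on the underlying smooth manifold is free to induce the opposite orientation. Since $\T^4$ admits an orientation-reversing self-diffeomorphism, the oriented manifold $\overline{\T^4\#\C P^2}$ is orientation-preservingly diffeomorphic to $\T^4\#\overline{\C P^2}$, i.e.\ to the blow-up of a complex torus at a point, which is K\"ahler and hence symplectic; indeed the paper itself exhibits $T^4\#\overline{\C P^2}$ as an \emph{algebraic} manifold in the proposition following the theorem on proper inclusions. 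So the smooth manifold $\T^4\#\C P^2$ \emph{does} carry a symplectic form, and your argument only shows that no such form can induce the orientation you fixed.

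The paper's example $M=\C P^2\#\C P^2\#\Sigma$, with $\Sigma$ an aspherical homology $4$-sphere of Ratcliffe--Tschantz, is immune to this: there $\chi=4$, $\tau=2$, and $2\chi+3\tau=14$ is not a sum of two squares, while for the reversed orientation $2\chi+3\tau=2$ and every characteristic vector has square $-a^2-b^2<0$, so neither orientation supports an almost complex structure. (Essentiality comes from the degree-one collapse onto $\Sigma=K(\pi_1(\Sigma),1)$, and c-symplecticity from the $\C P^2$ summands.) To repair your proof you would have to replace $\T^4\#\C P^2$ by an example of this kind, or otherwise rule out symplectic forms inducing \emph{both} orientations --- which for your manifold is impossible.
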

\begin{proof}
Let $\Sigma$ be an aspherical 4-dimensional homology sphere (see \cite{RaT}).
We consider the connected sum $M=\C P^2\#\C P^2\#\Sigma$ and show that it does not admit an almost complex structure.
According to the result of Ehresmann and Wu, a compact 4-manifold $ M $ has an almost complex structure with first Chern class $ c_1 \in H^2(M,\mathbb{Z}) $ if and only if $ c_1 $ reduces modulo 2 to the second Stiefel-Whitney class $ w_2 $ and 
$$ 
c_1^2([M])=3\tau+2\chi, 
$$
where $ \chi $ is the Euler characterictic of $ M $ and $ \tau $ is its signature~(\cite[page 119]{MS}). A routine computation shows that $ \chi=4 $, $ \tau=2 $ and
$ c_1^2([M]) $ is the sum of squares of two integers. But 14 can not be represented in such form. Hence $ M $ does not admit an almost complex structure and therefore is not a symplectic manifold because every symplectic manifold admits a compatible almost complex structure. Furthermore, $ \Sigma =K(\pi_1(\Sigma),1) $, and the collapsing map $ f:M \to \Sigma $ has degree 1. Thus $ M $ is a rationally essential manifold since the homomorphism induced by $ f $ on the 4th homology groups $ f_*:H_4(M;\mathbb{Q}) \to H_4(\Sigma;\mathbb{Q}) $ is nontrivial. 

Since $\Sigma$ is a homology sphere, the collapsing map $ i:M \to \mathbb{C}P^2\#\mathbb{C}P^2 $ induces the isomorphism
$$ i^*:H^2(\mathbb{C}P^2;\mathbb{R}) \oplus H^2(\mathbb{C}P^2;\mathbb{R}) \to H^2(M;\mathbb{R}).  $$ 
Let $ \{[\omega_1],[\omega_2]\}$ be a basis of $ H^2(\mathbb{C}P^2;\mathbb{R})\oplus H^2(\mathbb{C}P^2;\mathbb{R}) $. Then $ i^*([\omega_1]+[\omega_2])^2 \neq 0 $ in $ H^4(M;\mathbb{R}) $. Hence $ M $ is a c-symplectic manifold. 
\end{proof}
\begin{remark}
Note that the Dranishnikov-Rudyak conjecture is not true for c-symplectic manifolds. Consider a rationally essential c-symplectic  manifold $ M=\C P^4\#\C P^4\#(\Sigma \times \Sigma) $ with fundamental group $ \pi_1(M) \simeq \pi_1(\Sigma) \times \pi_1(\Sigma) $. Since $ \Sigma \times \Sigma $ is the Eilenberg-MacLane space $ K(\pi_1(M),1) $ and $ H_6(\Sigma \times \Sigma;\Q) $ is trivial then there does not exist a rationally essential 6-manifold with fundamental group isomorphic to $ \pi_1(M) $.
\end{remark}
\begin{theorem}\label{t:proper}
All the inclusions of classes
$$
\mathcal{EA}\subset\mathcal{EHS}\subset\mathcal{ES}\subset\mathcal{ECS} 
$$ are proper.
\end{theorem}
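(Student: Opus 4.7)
The plan is to exhibit, for each of the three strict inclusions, a rationally essential manifold that lies in the larger class but not in the smaller one.

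The inclusion $\mathcal{ES}\subsetneq\mathcal{ECS}$ is already witnessed by \propref{p:csym}: the $4$-manifold $\C P^2\#\C P^2\#\Sigma$ constructed there is rationally essential and c-symplectic, and the Ehresmann--Wu obstruction shows it admits no almost complex structure, so it is not symplectic.

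For $\mathcal{EHS}\subsetneq\mathcal{ES}$, I would use the Kodaira--Thurston manifold $KT$, the quotient of the product of the $3$-dimensional real Heisenberg group with $\R$ by a cocompact lattice. It is a closed symplectic $4$-dimensional nilmanifold, hence aspherical and rationally essential, and it carries Thurston's symplectic form. The crucial point is the standard observation that HLP on a closed symplectic $2n$-manifold $(M,\omega)$ forces each odd Betti number to be even: for odd $k\le n$ the bilinear form
\[
(\alpha,\beta)\longmapsto \la [\omega]^{n-k}\smile\alpha\smile\beta,[M]\ra
\]
on $H^k(M;\R)$ is skew-symmetric by graded commutativity and non-degenerate by combining Poincar\'e duality with HLP, so $b_k(M)$ must be even. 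Since $b_1(KT)=3$, no symplectic form on $KT$ satisfies HLP, and hence $KT\in\mathcal{ES}\sminus\mathcal{EHS}$.

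For $\mathcal{EA}\subsetneq\mathcal{EHS}$, I would take a non-algebraic complex torus $T=\C^n/\Lambda$ of complex dimension $n\ge 2$, with $\Lambda$ chosen so that the Riemann bilinear relations fail. Such $T$ is K\"ahler with its flat metric, hence symplectic with HLP by the Hard Lefschetz Theorem; it equals $K(\Z^{2n},1)$, so it is aspherical and rationally essential; and by the Kodaira embedding theorem it admits no projective embedding, hence is not in $\mathcal{A}$. The main obstacle across the three cases is the middle step, where one must ensure that the failure of the K\"ahler condition on $KT$ manifests specifically as failure of HLP rather than merely failure of formality or of Hodge symmetry; the parity-of-$b_1$ argument above is precisely what certifies this.
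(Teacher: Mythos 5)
Your treatments of the inclusions $\mathcal{ES}\subset\mathcal{ECS}$ (via \propref{p:csym}) and $\mathcal{EHS}\subset\mathcal{ES}$ (via the Kodaira--Thurston manifold) are correct; for the latter the paper instead cites the Benson--Gordon theorem that a symplectic nilmanifold of Lefschetz type is diffeomorphic to a torus, whereas your parity argument (HLP forces odd Betti numbers of a closed symplectic manifold to be even, and $b_1(KT)=3$) is more elementary and self-contained. That part is fine.

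The third leg, $\mathcal{EA}\subset\mathcal{EHS}$, has a genuine gap. The classes $\mathcal{A}\subset\mathcal{HS}\subset\mathcal{S}\subset\mathcal{CS}$ are classes of smooth manifolds: a manifold belongs to $\mathcal{A}$ if it \emph{admits} the structure of a complex projective algebraic manifold. A non-algebraic complex torus $\C^n/\Lambda$ is diffeomorphic to the standard torus $T^{2n}$, which is diffeomorphic to a product of elliptic curves and is therefore projective algebraic; so the underlying manifold of your example lies in $\mathcal{EA}$ after all, and it does not separate the two classes. (This is also why the paper leaves ``does there exist an essential K\"ahler manifold that is not algebraic?'' as an open question: ruling out \emph{every} algebraic structure on the underlying manifold requires a topological, not merely complex-analytic, obstruction.) The paper's example is the blow-up $M$ of $\mathbb{H}\times\mathbb{H}$ (the product of two Heisenberg nilmanifolds) along a torus: by Cavalcanti it is symplectic with the hard Lefschetz property, it is rationally essential because it admits a degree-one classifying map to the aspherical manifold $\mathbb{H}\times\mathbb{H}$, and it carries a nontrivial triple Massey product. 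Non-formality is a homotopy invariant, and by Deligne--Griffiths--Morgan--Sullivan every K\"ahler (in particular every algebraic) manifold is formal, so no manifold homotopy equivalent to $M$ can be algebraic. You would need to replace your torus example with one carrying such a topological obstruction.
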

\begin{proof} 
 First we prove that the inclusion $ \mathcal{EA}\subset\mathcal{EHS} $ is proper. Let $ \mathbb{H} $ be
the Heisenberg manifold.  Then the blow-up $ M $ of $ \mathbb{H} \times \mathbb{H} $ along a torus is a symplectic manifold that satisfies the hard Lefschetz property and has nontrivial triple Massey product~\cite{C}. Since $ \mathbb{H} $ is an aspherical manifold then $ \mathbb{H} \times \mathbb{H} $ is the Eilenberg-MacLane space. So $ M $ is a rationally essential manifold because there exists a degree 1 (classifying) map $ f:M \to \mathbb{H} \times \mathbb{H} $. Note that $M$ is not algebraic since it has non-trivial Massey product, while all K\"ahler (and therefore algebraic)
manifolds are formal spaces, \cite{DGMS}, and hence all their Massey products are trivial.

\m Now we prove that the inclusion $ \mathcal{EHS}\subset\mathcal{ES} $ is proper. Consider the Kodaira-Thurston manifold KT 
obtained by taking the product of the Heisenberg manifold $ \mathbb{H} $ and the circle $ S^1 $. It is well-known that KT is a symplectic manifold. The Kodaira-Thurston manifold is rationally essential because it is a nilmanifold and it can not have the hard Lefschetz property because a symplectic nilmanifold of Lefschetz type is diffeomorphic to a torus~\cite{BG}. 

\m We have already shown that the inclusion $ \mathcal{ES}\subset\mathcal{ECS} $ is  proper, see \propref{p:csym} above.

\end{proof}

The Dranishnikov-Rudyak conjecture cannot be reduced to the aspherical case in view of the following
\begin{prop}
The blow up of a 4-torus  at a single point $M=T^4\#\overline{\C P^2}$ is an algebraic manifold which does not admit an
aspherical symplectic form.
\end{prop}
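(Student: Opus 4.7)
The plan is to establish the two assertions separately. For algebraicity of $M$, I would observe that $T^4=\C^2/\Lambda$ may be chosen as an abelian surface, hence a smooth complex projective variety; blowing up a smooth projective variety at a point yields another such variety, so $M=\mathrm{Bl}_p(T^4)$ is algebraic.

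For the non-existence of an aspherical symplectic form I would argue by contradiction. Assume $\omega$ is a symplectic form on $M$ with $[\omega]$ aspherical. Since $\overline{\C P^2}$ is simply connected, van Kampen gives $\pi_1(M)=\Z^4$, so $K(\pi_1(M),1)=T^4$, and one may take the classifying map $f\colon M\to T^4$ to be the (degree-one) blow-down map. Write $e\in H^2(M;\R)$ for the Poincar\'e dual of the exceptional sphere $E$. Then $H^2(M;\R)=f^*H^2(T^4;\R)\oplus \R\,e$, and the summands are orthogonal under the cup product pairing because for every $\alpha\in H^2(T^4;\R)$,
\[
\langle f^*\alpha\smile e,\,[M]\rangle=\langle f^*\alpha,\,[E]\rangle=\langle \alpha,\,f_*[E]\rangle=0,
\]
as $f(E)$ is a point. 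Writing $[\omega]=f^*a$ for some $a\in H^2(T^4;\R)$, this forces $[\omega]\cdot e=0$, i.e.\ $\int_E\omega=0$.

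To derive a contradiction I would invoke the structure theory of symplectic $4$-manifolds. From $b_1(M)=4$, $\chi(M)=1$, $\sigma(M)=-1$ one has $b^+(M)=3>1$ and Kodaira dimension $0$, and among minimal symplectic $4$-manifolds of Kodaira dimension zero only $T^4$ has $b_1=4$, so the symplectic minimal model of $M$ is $T^4$. By McDuff's theorem on symplectic blow-downs (equivalently, by combining Taubes' SW $=$ Gr theorem with the Seiberg-Witten blow-up formula, which identifies the basic classes of $M$ as $\{\pm e\}$), every symplectic form on the non-minimal manifold $M$ is symplectomorphic to a symplectic blow-up of some symplectic form $\omega'$ on $T^4$. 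In cohomology a symplectic blow-up has class $\pi^*[\omega']+\lambda\,e$ with blow-up parameter $\lambda\neq 0$, so $[\omega]\cdot e=-\lambda\neq 0$, the contradiction. Asphericity of $[\omega]$ is preserved under the symplectomorphism identification because any two classifying maps of $M$ induce the same subspace $f^*H^2(T^4;\R)$ in $H^2(M;\R)$.

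The main obstacle is invoking McDuff's structure theorem (or, equivalently, the Seiberg-Witten/Taubes machinery) to conclude that every symplectic cohomology class on $M$ has non-trivial $e$-component. Everything else --- van Kampen, the cohomology splitting of a blow-up, and the identification of the classifying map with the blow-down --- is routine. An attempt to argue using only the vanishing of $\int_E\omega$ (equivalently, that $\omega|_E$ is exact on $E\cong S^2$) does not suffice, since this alone merely forces $E$ to fail to be a symplectic submanifold; one genuinely needs the $4$-dimensional theory to exclude the symplectic form itself.
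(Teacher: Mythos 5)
Your proof is correct, but it takes a genuinely different and much heavier route than the paper. The paper's argument for the second assertion is local and elementary: given a symplectic form $\omega$ on $M$, it restricts $\omega$ to the summand $\overline{\C P^2}\setminus D\subset M$, extends this restriction to a closed form $\omega'$ on $\overline{\C P^2}$ with $\int_{\overline{\C P^2}}\omega'^2\neq 0$, deduces $[\omega']\neq 0$ in $H^2(\overline{\C P^2};\R)$, and then uses the simple connectivity of $\overline{\C P^2}$ (so that $H_2$ is spherical) to produce a map $f\colon S^2\to \overline{\C P^2}\setminus D\subset M$ with $\int_{S^2}f^*\omega\neq 0$; no gauge theory appears. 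You instead reduce to showing $\int_E\omega\neq 0$ for the exceptional sphere and invoke the Taubes/Li--Liu/McDuff package (the SW basic classes of $T^4\#\overline{\C P^2}$ are $\pm e$, hence $\pm E$ carries a symplectic $(-1)$-sphere for \emph{every} symplectic form, hence $[\omega]\cdot e\neq 0$). Both routes prove the statement; yours buys the sharper conclusion that every symplectic form on $M$ is a blow-up form pairing nontrivially with $e$, at the cost of deep four-dimensional machinery, while the paper stays within de Rham theory and elementary topology. Your closing remark --- that the mere vanishing of $\int_E\omega$ does not obviously self-contradict --- is a fair observation, and it points precisely at the one step the paper asserts rather than argues in detail, namely the existence of the closed extension $\omega'$ with $\int_{\overline{\C P^2}}\omega'^2\neq 0$ (if $\omega$ were exact on $\overline{\C P^2}\setminus D$, every closed extension would have vanishing square by the same restriction isomorphism, so the whole content sits in that claim); your SW argument supplies exactly the positivity that is being used there. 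Finally, note that the paper's proof does not address algebraicity at all, whereas your abelian-surface/blow-up observation handles that part cleanly.
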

\begin{proof}
Let $ \omega $ be a symplectic form on $ M $. Then $ \displaystyle{\int_M{\omega^2} \neq 0 } $. We can obtain a form $ \omega' $ on $ \overline{\C P^2} $ that extends the restriction of $ \omega $ on $ \overline{\C P^2}\setminus D $ such that $ \displaystyle{ \int_{\overline{\C P^2}}{\omega'^2}\neq 0} $ where $ D $ is a small enough disk. Then there exists a map $ f:S^2\to \overline{\C P^2}\setminus D $ with
$ \displaystyle{\int_{S^2}{f^*\omega'} \neq 0} $ because if we assume that $ \displaystyle{\int_{S^2}{f^*\omega'}}=0 $ for all maps 
$ f:S^2 \to \overline{\C P^2}\setminus D $ then $ [\omega'] =0 $ in $ H^2(\overline{\C P^2};\R) $. Therefore $ [\omega']^2=0 $ and 
$ \displaystyle{\int_{\overline{\C P^2}}{\omega'^2}=0} $ which contradicts to the choice of $ \omega' $. Consider $f:S^2 \to \overline{\C P^2}\setminus D$ such that $\displaystyle{\int_{S^2}{f^*\omega'}\neq 0}.$ Since $ \omega $ and $ \omega' $ coincide on $ \overline{\C P^2}\setminus D $ then $ \displaystyle{\int_{S^2}{f^*\omega} \neq 0 } $. Thus $ \omega $ is not an aspherical symplectic form.
\end{proof} 

It is natural to consider the class of K\"ahler manifolds $\mathcal{K}$ and ask whether the inclusions $\mathcal{EA}\subset\mathcal{EK}\subset\mathcal{ES}$ are proper. It is known that inclusions $\mathcal{A}\subset\mathcal{K}\subset\mathcal{S}$ are proper \cite{V},\cite{C} and manifold $M$ in \theoref{t:proper} shows that inclusion $\mathcal{EK}\subset\mathcal{ES}$ is also proper. Note that $M$ is not K\"ahler because it is not formal.

\begin{question}
Does there exist an essential K\"ahler manifold that is not algebraic?
\end{question}

\begin{question}
In view of the theorems proved above we may ask whether the Dranishnikov-Rudyak conjecture holds true for the class of K\"ahler manifolds
with aspherical K\"ahler form.
\end{question}

 \end{document}